\def\qed{\hfill {\hbox{${\vcenter{\vbox{               
   \hrule height 0.4pt\hbox{\vrule width 0.4pt height 6pt
   \kern5pt\vrule width 0.4pt}\hrule height 0.4pt}}}$}}}
\def\tr{\triangleright}
\newtheorem{theorem}{Theorem}
\newtheorem{definition}{Definition}
\newtheorem{proposition}[theorem]{Proposition}
\newtheorem{corollary}[theorem]{Corollary}
\newtheorem{example}{Example}
\newtheorem{remark}[example]{Remark}
\newenvironment{proof}[1][Proof]{\smallskip\noindent{\bf #1.}\quad}%
{\qed\par\medskip}
\date{}
\title{\Large \textbf{Rack shadows and their invariants}}
\author{Wesley Chang\footnote{Email: wesleychang1463@gmail.com} 
\and Sam Nelson\footnote{Email: knots@esotericka.org}} 
\begin{document}





\maketitle

\begin{abstract}
A \textit{rack shadow} or \textit{rack set} is a set $X$ with a rack 
action by a rack $R$, 
analogous to a vector space over a field. We use \textit{shadow colorings} 
of classical link diagrams to define enhanced rack counting invariants 
and show that the enhanced invariants are stronger than 
unenhanced counting invariants.
\end{abstract}

\begin{center}
\parbox{5.5in}{\textsc{Keywords:} quandles, racks, rack shadows,
link invariants, enhancements of counting invariants, shadow polynomials

\smallskip

\textsc{2000 MSC:} 57M27, 57M25 
}\end{center}

%

\section{\large \textbf{Introduction}}\label{I}

A \textit{rack} is a non-associative algebraic structure whose axioms 
correspond to the framed Reidemeister moves. \textit{Quandles} are specific 
types of racks whose axioms correspond to the three unframed 
Reidemeister moves. Every framed knot or link has a \textit{fundamental 
rack} described by generators and relations which may be read from any 
diagram of the knot or link. Similarly, every unframed knot or link 
has a \textit{fundamental quandle} \cite{J,FR,M}.
	 
The \textit{rack and quandle counting invariants} are integer-valued
invariants determined by the number of homomorphisms from the fundamental 
rack and quandle of a knot or link, which can be pictured as colorings 
of a knot or link diagram. Invariants of quandle and rack colored knot 
diagrams define \textit{enhancements} of the counting invariant; 
enhancements can also make use of extra structure of the coloring 
quandle or rack. In either case, enhancements specialize to the
original counting invariants and in most cases strengthen them. 
In this paper we will introduce enhancements of the rack counting 
invariants using sets with right actions by a rack we call \textit{rack 
shadows} (also known as \textit{rack sets}). 
	
The paper is organized as follows. In section \ref{B} we recall
the basics of racks, quandles, and their counting invariants. 
In section \ref{SB} we recall rack shadows and shadow colorings
of link diagrams. In section \ref{SBinv} we enhance the shadow counting
invariants with shadow polynomials and show that 
the enhanced invariants contain more information (and are thus stronger) 
than the unenhanced counting invariants. In section \ref{Q} we collect 
questions for future research. 

\smallskip

\section{\large \textbf{Racks and Quandles}}\label{B}

We begin with a definition from \cite{FR}.

\begin{definition} \textup{
A rack is a set R with two binary operations, $\rhd$ and $\rhd^{-1}$, 
that satisfy for  all $x,y,z \in R$:}
\begin{list}{}{}
\item[\textup{(i)}] $(x \rhd y) \rhd^{-1} y = x$ and 
$(x \rhd^{-1} y) \rhd y = x$, and
\item[\textup{(ii)}] $(x \rhd y) \rhd z = (x \rhd z) \rhd (y \rhd z)$. 
\end{list} 
\end{definition}

Racks can be understood as sets with binary operations in which right 
``multiplication'' by every element is an automorphism. It is a standard 
exercise to check that in any rack we also have
\begin{list}{}{}
\item[(i)] $(x \rhd^{-1} y) \rhd^{-1} z 
= (x \rhd^{-1} z) \rhd^{-1} (y \rhd^{-1} z)$,
\item[(ii)] $(x \rhd y) \rhd^{-1} z = (x \rhd^{-1} z) \rhd (y \rhd^{-1} z)$, and
\item[(iii)] $(x \rhd^{-1} y) \rhd z = (x \rhd z) \rhd^{-1} (y \rhd z)$.
\end{list}

\begin{definition}\textup{
A rack in which every $x\in R$ satisfies $x \rhd x = x$ is a \textit{quandle}.}
\end{definition}

\begin{example}
\textup{Standard examples of rack structures include:
\begin{list}{$\bullet$}{}
\item \textit{$(t,s)$-racks}: Modules over the ring 
$\tau=\mathbb{Z}[t^{\pm 1},s]/(s(t+s-1))$ with rack operation given by
\[x\tr y = tx+ sy.\]
If we set $s=1-t$, the result is a quandle known as an \textit{Alexander 
quandle}.
\item \textit{Coxeter racks}: Let $\mathbb{F}$ be a field, $V$ be an 
$\mathbb{F}$-vector space and
$(,):V\times V\to \mathbb{F}$ a symmetric bilinear form. Then the subset 
$C\subset V$ of nondegenerate vectors is a rack under
\[x\tr y = \alpha\left(x-2\frac{(x,y)}{(y,y)}y\right)\]
where $0\ne \alpha\in \mathbb{F}$.
If $\alpha=-1$ then $C$ is a quandle.
\item \textit{Constant action racks:} For any finite set $R=\{r_1,\dots,r_n\}$
and any permutation $\sigma\in S_n$ setting $r_i\tr r_j=r_{\sigma(i)}$
defines a rack operation. If $\sigma=\mathrm{Id}$ we have a quandle
called the \textit{trivial} quandle, $T_n$. $T_n$ is the only quandle 
in which the $\tr$ operation is associative.
\item \textit{The Fundamental Rack of a framed link} Let $L$ be an oriented 
framed link diagram. The fundamental rack of $L$ is the set of equivalence 
classes of rack words in a set of generators corresponding to arcs in $L$
under the equivalence relation determined by the rack axioms together with the
\textit{crossing relations} from the diagram.
\[\includegraphics{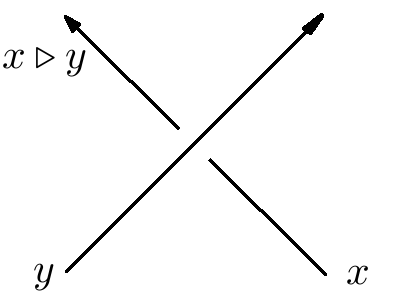} \quad
\includegraphics{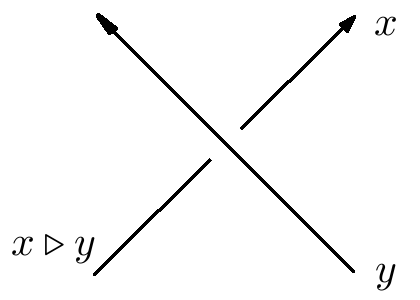}\]
As with other universal algebraic objects, we specify a fundamental rack
with a presentation  $\langle x_1,\dots, x_n\ |\ r_1,\dots,r_k\rangle$ 
where $x_i$ are generators and $r_i$ are relations, with the rack axiom 
relations understood.
\end{list}}
\end{example}

To every finite rack $R=\{r_1,\dots,r_n\}$ we associate an $n\times n$ matrix
$M_R$ encoding the operation table of the rack, called the \textit{rack matrix}.
Specifically, the entry in row $i$ column $j$ of $M_R$ is $k$ where 
$r_k=r_i\tr r_j$.

\begin{example}\label{ex1}\textup{
Let $R=\mathbb{Z}_4=\{1,2,3,4\}$ and set $t=1,$ $s=2$. Then $2(1+2-1)=4=0$
and we have a rack operation $x\tr y=x+2y$. The rack matrix is given by}
\[M_R=\left[\begin{array}{cccc}
3 & 1 & 3 & 1 \\
4 & 2 & 4 & 2 \\
1 & 3 & 1 & 3 \\
2 & 4 & 2 & 4 \\
\end{array}\right].\]
\end{example}

It follows easily from the rack axioms that every column in a rack matrix
must be a permutation of $\{1,2,\dots, n\}$. It is also true (see \cite{N3}, 
Corollary 2), 
though perhaps less obvious, that the diagonal of a rack matrix must be a 
permutation. The \textit{rack rank} of $R$ is the exponent of the diagonal
permutation regarded as an element of the symmetric group $S_n$.

\begin{example}
\textup{The rack in example \ref{ex1} has rack rank 2, since the diagonal
permutation is the transposition $(13)$.}
\end{example}

\begin{example}
\textup{Every quandle has diagonal permutation $\mathrm{Id}_{S_n}$ and thus 
rack rank 1.
Indeed, a finite rack is a quandle if and only if it has rack rank 1.}
\end{example}

\begin{definition}
\textup{Let $R$ and $R'$ be racks. A \textit{rack homomorphism} is a function
$f:R\to R'$ satisfying $f(x\tr y)=f(x)\tr f(y)$ for all $x,y\in R$. A bijective
rack homomorphism is a \textit{rack isomorphism}.}
\end{definition}

In the case that $R=FR(L)$ is the fundamental rack of an oriented framed 
link $L$, a unique homomorphism $f:R\to R'$ may be specified by assigning an 
element of $R'$ to each arc of $L$ such that the \textit{crossing condition}
\[\includegraphics{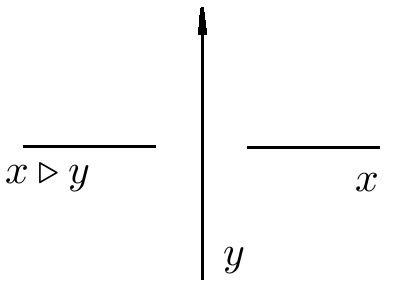}\]
is satisfied at every crossing. Such an assignment of elements of $R'$ to
arcs in $L$ is known as a \textit{rack coloring} of $L$ by $R'$. The
relationship between the rack axioms and the framed Reidemeister moves
guarantees that rack colorings are preserved by framed Reidemeister moves,
in the sense that a rack coloring of a diagram before a move corresponds
to a unique rack coloring of the diagram after the move.

Thus, if $T$ is a finite rack and $K$ is a framed oriented knot, 
the number of rack colorings $|\mathrm{Hom}(FR(K),T)|$ of $K$ by $T$ is
an invariant of framed isotopy. In \cite{N3} it was shown that for
finite racks $T$, the numbers of colorings of two framings of $K$ are
equal if the writhes of the diagrams differ by a multiple of the
rack rank $N$ of $T$. More generally, if $L=L_1\cup \dots\cup L_k$ is a link
with $k$ ordered components, then $w(L_i) \equiv w(L_i') \ \mathrm{mod}\ N$
for $i=1,\dots, k$ and $L$ ambient isotopic to $L'$ implies that the rack 
colorings
of $L$ and $L'$ by $T$ are in one-to-one correspondence. For a framing vector
$\mathbf{w}=(w(L_1),\dots,w(L_k))$ let us abbreviate $\prod_{i=1}^k q_i^{w_i}$ as
$q^{\mathbf{w}}$. Then we thus have:

\begin{definition}\label{def:rcinv}
\textup{Let $L=L_1\cup \dots \cup L_k$ be an oriented link with $k$ ordered 
components, $T$ a finite rack with rack rank $N$ and $W=(\mathbb{Z}_N)^k$. The \textit{integral 
rack counting invariant} is}
\[\mathrm{rc}(L,T)= \sum_{\mathbf{w}\in W} |\mathrm{Hom}(FR(L,\mathbf{w}),T)|\]
\textup{and the \textit{polynomial rack counting invariant} is}
\[\mathrm{prc}(K,T)=\sum_{\mathbf{w}\in W} |\mathrm{Hom}(FR(L,\mathbf{w}),T)|q^{\mathbf{w}}\]
\textup{where $(L,\mathbf{w})$ is a diagram of $L$ with writhe vector 
$\mathbf{w}=(w(L_1),\dots,w(L_k))$}.
\end{definition}

In  \cite{N3} it is shown that:
\begin{theorem}
Let $L$ and $L'$ be oriented links and $T$ a finite rack. If $L$ is
ambient isotopic to $L'$ then we have $rc(L,T)=rc(L',T)$ and
$prc(L,T)=prc(L',T)$.
\end{theorem}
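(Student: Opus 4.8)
The plan is to reduce the theorem to the single fact, established in \cite{N3} and recalled immediately above, that changing a framing by a multiple of the rack rank $N$ in each component does not change the number of $T$-colorings. Concretely, for a fixed oriented link $L$ with $k$ ordered components I want to show that the assignment
\[\mathbf{w}\longmapsto |\mathrm{Hom}(FR(L,\mathbf{w}),T)|\]
is a well-defined function on $W=(\mathbb{Z}_N)^k$ that depends only on the ambient isotopy class of $L$ (and on $T$). Once this is in hand, $\mathrm{rc}(L,T)$ is literally the sum of these numbers over $W$ and $\mathrm{prc}(L,T)$ is their generating function in the $q_i$, so both are ambient isotopy invariants and the theorem follows at once.

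First I would fix an honest integer writhe vector $\mathbf{v}\in\mathbb{Z}^k$ and check that the (finite) number $|\mathrm{Hom}(FR(L,\mathbf{v}),T)|$ depends only on the framed isotopy class of $(L,\mathbf{v})$. This is exactly the correspondence between the rack axioms and the framed Reidemeister moves recalled in Section~\ref{B}: each framed Reidemeister move --- Reidemeister~II, Reidemeister~III, and the writhe-preserving form of Reidemeister~I --- induces an isomorphism between the fundamental racks of the diagrams before and after the move (a Tietze-type transformation of the presentation), hence a bijection between their sets of homomorphisms to $T$. Since the writhe vector is a complete invariant of the framing of a fixed underlying link, any two diagrams of $L$ with writhe vector $\mathbf{v}$ are connected by such moves, so $|\mathrm{Hom}(FR(L,\mathbf{v}),T)|$ is genuinely a function of the pair $(L,\mathbf{v})$ with $L$ taken up to ambient isotopy.

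Next I would apply the quoted result of \cite{N3}: if two diagrams of the same link have writhe vectors that agree componentwise modulo $N$, their $T$-coloring sets are in bijection. This says precisely that the function $\mathbf{v}\mapsto|\mathrm{Hom}(FR(L,\mathbf{v}),T)|$ is constant on the fibers of the reduction map $\mathbb{Z}^k\to(\mathbb{Z}_N)^k=W$, and since every class in $W$ is realized by adding suitable kinks to any chosen diagram of $L$, it descends to a well-defined function on $W$. Putting the two steps together, $\mathbf{w}\mapsto|\mathrm{Hom}(FR(L,\mathbf{w}),T)|$ is a well-defined, ambient-isotopy-invariant function on $W$, so $\mathrm{rc}(L,T)=\sum_{\mathbf{w}\in W}|\mathrm{Hom}(FR(L,\mathbf{w}),T)|$ and $\mathrm{prc}(L,T)=\sum_{\mathbf{w}\in W}|\mathrm{Hom}(FR(L,\mathbf{w}),T)|\,q^{\mathbf{w}}$ take the same values on $L$ and on any ambient isotopic $L'$.

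The substantive ingredient --- framing independence modulo $N$ --- is assumed, so the remaining content is organizational: confirming that the count is unaffected by the choice of base diagram for $L$, by the choice of integer representatives of the classes in $W$, and by which kinks are used to realize a prescribed writhe. The one spot that deserves care is the framed Reidemeister calculus in the first step, namely the claim that ``same underlying link and same writhe vector'' implies ``connected by writhe-preserving framed Reidemeister moves''; I would justify this by commuting all Reidemeister~I moves in a sequence taking one diagram to the other toward the end and then cancelling the resulting oppositely-signed curls in pairs. Everything else is bookkeeping.
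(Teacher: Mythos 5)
Your argument is correct, and it is essentially the intended one: the paper itself gives no proof of this theorem, deferring entirely to \cite{N3}, and what you have written is a careful assembly of exactly the ingredients the paper recalls in the paragraph preceding Definition~\ref{def:rcinv} (invariance of colorings under framed Reidemeister moves, the mod-$N$ framing independence from \cite{N3}, and the fact that the self-writhe vector classifies framings of a fixed link). The only point to watch is the one you already flag --- that two diagrams of the same link with equal writhe vectors are related by writhe-preserving framed moves --- together with the convention that $w(L_i)$ means the self-writhe of component $L_i$ (signs of self-crossings only), so that it genuinely records the framing; with those in place the bookkeeping goes through as you describe.
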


Unlike groups, in which the trivial action is concentrated in a single 
identity element, trivial action is distributed throughout the structure 
in racks. We quantify this distribution with a \textit{rack polynomial}.

\begin{definition}
\textup{Let $R$ be a rack. The \textit{rack polynomial} of $R$ is the
two-variable polynomial}
\[\mathrm{rp}(R)=\sum_{x\in R} t^{c(x)}s^{r(x)}\]
\textup{where $c(x)=|\{y\in R\ |\ y\tr x = y\}|$ and
$r(x)=|\{y\in R\ |\ x\tr y = x\}|$.}
\end{definition}

We can easily compute the rack polynomial of a rack from its rack matrix
by counting the number of occurrences of the row number in each row and 
column. 
\begin{example}\textup{
The rack in example \ref{ex1} has rack polynomial $\mathrm{rp}(R)=2s^2+2t^4s^2$.
}\end{example}

In \cite{N} it is shown that:

\begin{theorem}
If $T$ and $T'$ are isomorphic racks, then $\mathrm{rp}(T)=\mathrm{rp}(T')$.
\end{theorem}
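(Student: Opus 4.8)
The plan is to use a rack isomorphism $\phi\colon T\to T'$ to set up, for each $x\in T$, a bijection between the two subsets of $T$ counted by $c(x)$ and $r(x)$ and the corresponding subsets of $T'$ counted by $c(\phi(x))$ and $r(\phi(x))$, and then simply reindex the sum defining $\mathrm{rp}$.

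First I would fix a rack isomorphism $\phi\colon T\to T'$. For any $x\in T$, I claim $\phi$ restricts to a bijection from $\{y\in T\ |\ y\tr x = y\}$ onto $\{y'\in T'\ |\ y'\tr \phi(x) = y'\}$. Indeed, if $y\tr x = y$ then applying $\phi$ and using the homomorphism property gives $\phi(y)\tr \phi(x) = \phi(y\tr x) = \phi(y)$, so $\phi(y)$ lies in the target set; injectivity is inherited from $\phi$. For surjectivity I would run the same argument with $\phi^{-1}$, which is again a rack homomorphism since $\phi$ is a rack isomorphism: if $y'\tr \phi(x) = y'$ then $\phi^{-1}(y')\tr x = \phi^{-1}(y')$, and $\phi(\phi^{-1}(y')) = y'$. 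Hence $c(x) = c(\phi(x))$. An identical argument with the roles of the two arguments of $\tr$ interchanged — using the set $\{y\in T\ |\ x\tr y = x\}$ — gives $r(x) = r(\phi(x))$.

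Then I would substitute these equalities into the definition of the rack polynomial and reindex: since $\phi$ is a bijection $T\to T'$, as $x$ ranges over $T$ the element $\phi(x)$ ranges over $T'$, so
\[
\mathrm{rp}(T) = \sum_{x\in T} t^{c(x)} s^{r(x)} = \sum_{x\in T} t^{c(\phi(x))} s^{r(\phi(x))} = \sum_{x'\in T'} t^{c(x')} s^{r(x')} = \mathrm{rp}(T'),
\]
which is the claim. There is no serious obstacle here; the only point that requires any care is checking that the correspondence between the subsets is genuinely a bijection, which is exactly where one must use that the inverse of a rack isomorphism is again a rack homomorphism (so that surjectivity, not merely "the image lands in the right set," holds). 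Everything else is bookkeeping with the definition of $\mathrm{rp}$.
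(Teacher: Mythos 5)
Your proof is correct: the paper itself only cites this theorem from reference [N] without reproducing a proof, but your argument --- showing $c(x)=c(\phi(x))$ and $r(x)=r(\phi(x))$ via the isomorphism and its inverse, then reindexing the sum --- is exactly the standard approach, and it matches the paper's own proof of the analogous proposition that isomorphic $R$-shadows have equal shadow polynomials. You correctly identify the one point needing care, namely that surjectivity of the restricted map uses that $\phi^{-1}$ is again a rack homomorphism.
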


Given a subrack of a rack $R$, there is a \textit{subrack polynomial}
which captures information both about the subrack itself and how it is 
embedded in $R$.

\begin{definition}\textup{
Let $S\subset R$ be a subrack of $R$. The \textit{subrack polynomial}
of $S\subset R$ is given by}
\[\mathrm{srp}_{S\subset R}(s,t) = \sum_{x\in S}t^{c(x)}s^{r(x)}.\]
\textup{That is, we simply sum up the contributions to the full rack 
polynomial from the elements of the subrack.}
\end{definition}

\begin{example}
\textup{The subrack $S=\{1,3\}\subset R$  in example \ref{ex1} has
subrack polynomial $\mathrm{srp}_{S\subset R}(s,t)=2s^2$.}
\end{example}

Rack polynomials can be used to define an enhancement of the rack counting
invariant, as described in \cite{CN}. Specifically, for each rack coloring we 
find the image subrack and use its subrack polynomial as a ``signature''
of the coloring. The multiset of such polynomials paired with framing
information then yields an enhancement of the rack counting invariant; 
alternatively we can convert to a ``polynomial-style'' format.

\begin{definition}
\textup{Let $L=L_1\cup \dots\cup L_k$ be an oriented link with ordered
components, $W=(\mathbb{Z}_N)^k$ the space of framing vectors 
mod $N$, and $T$ a finite target rack with rack rank $N$. The 
\textit{multiset rack polynomial invariant} is the multiset}
\[\mathrm{mrp}(L,T)=\{ (\mathrm{srp}_{\mathrm{Im}(f)\subset T}(s,t)),\mathbf{w})\ 
|\ f\in \mathrm{Hom}(FR(L,\mathbf{w}),T),
 \ \mathbf{w}\in W\}\]
\textup{and the \textit{polynomial rack polynomial invariant} is}
\[\mathrm{prp}(L,T)=\sum_{\mathbf{w}\in W}
\left( \sum_{f\in \mathrm{Hom}(FR(L,\mathbf{w}),T)} 
z^{\mathrm{srp}_{\mathrm{Im}(f)\subset T}(s,t)}q^{\mathbf{w}}\right).\]
\end{definition}

\begin{example}
\textup{
The trefoil knot $3_1$ below has integral rack counting invariant 
$\mathrm{rc}(3_1)=6$, polynomial rack counting invariant 
$\mathrm{prc}(3_1)=4+2q$ and polynomial rack polynomial invariant 
$\mathrm{prp}(3_1)=2z^{2t^2} + 2z^{s^4t^2} + 2qz^{s^4t^2}$ with respect to the
rack $R$ from example \ref{ex1}.}
\[\begin{array}{ccc}
\includegraphics{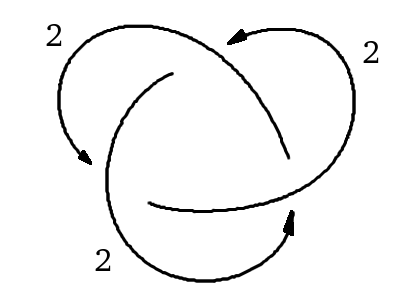} & \includegraphics{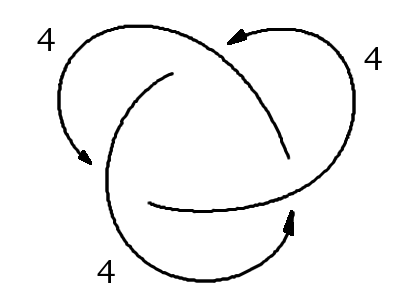} & \includegraphics{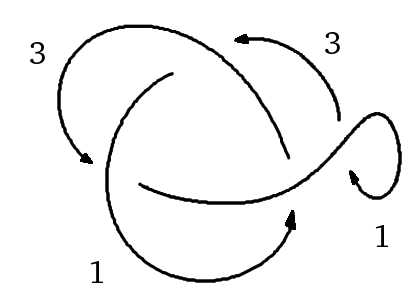} \\
\includegraphics{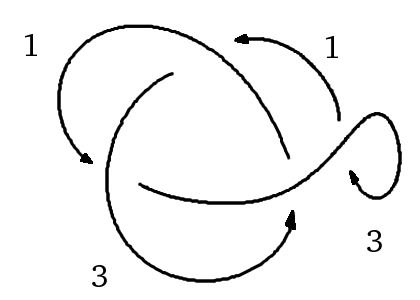} & \includegraphics{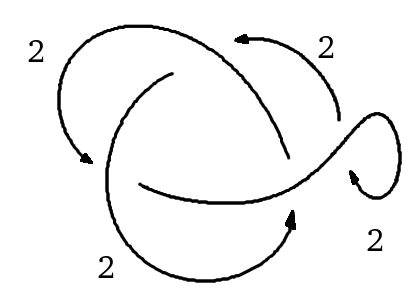} & \includegraphics{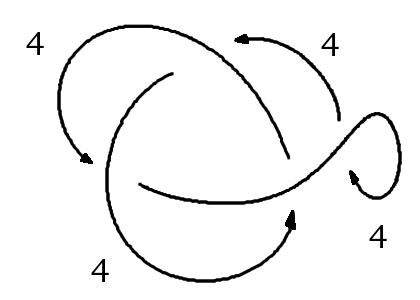} \\
\end{array}\]
\end{example}

\section{\large \textbf{Rack and Quandle Shadows}}\label{SB}

In this section  we define \textit{rack shadows} (also known as 
\textit{rack sets})  which can be used to generalize the  
\textit{shadow colorings} of knot diagrams by quandles described 
in previous work such as \cite{CKS, FRS, MN}.

\begin{definition}\textup{
Let $X$ be a set and $R$ a rack. A \textit{rack action} of $R$ on 
$X$ is an assignment of bijection $\cdot r:X \rightarrow X$ to each 
element of $R$ such that for all $x\in X$ and $r_1,r_2\in R$:}
\[(x\cdot r_1)\cdot r_2=(x\cdot r_2)\cdot (r_1 \rhd r_2).\]
\end{definition}

\begin{definition}\textup{ 
Let $R$ be a rack. A \textit{rack shadow} or \textit{$R$-shadow} is a 
set $X$ with a rack action by a rack $R$. The \textit{shadow matrix} of 
$X=\{x_1,\dots, x_m\}$ where $R=\{r_1,\dots, r_n\}$ is the $m\times n$ 
matrix whose $(i,j)$ entry is $k$ where $x_k=x_i\cdot r_j$.
A subset of $X$ closed under the action of $R$ is an $R$-\textit{subshadow}
of $X$.}
\end{definition}

\begin{example}
\textup{Let $R$ be any rack and let $X=\{x\}.$ Then $X$ is an $R$-shadow under
the shadow operation $x\cdot r=x$ for all $r\in R$, since we have}
\[(x\cdot r)\cdot r' = (x\cdot r)=x =(x\cdot r')=(x\cdot r')\cdot (r\tr r').\]
\textup{We will call this structure the \textit{singleton $R$-shadow}.}
\end{example}

\begin{example}
\textup{Let $R$ be any rack and let $X=R.$ Then $X$ is an $R$-shadow under
the shadow operation $x\cdot r=x\tr r$ for all $r\in R$, since we have}
\[(x\cdot r)\cdot r' = (x\tr r)\tr r'=(x\tr r')\tr (r\tr r')=
(x\cdot r')\cdot (r\tr r').\]
\textup{We will call this structure the \textit{rack $R$-shadow}. Note that
the shadow matrix of the rack $R$-shadow is the same as the rack matrix
of $R$.}
\end{example}

\begin{example}
\textup{Let $R$ be any rack and $X=\{x_1,\dots, x_n\}$ a set of cardinality 
$n$. Then for any permutation $\sigma\in S_n$ we have an $R$-shadow structure
given by $x_i\cdot r= x_{\sigma(i)}$:} 
\[(x\cdot r)\cdot r' = x_{\sigma(i)} \cdot r' = x_{\sigma^2(i)}
=x_{\sigma(i)} \cdot (r\tr r')=(x_{i}\cdot r') \cdot (r\tr r').\]
\textup{We will call this shadow structure a \textit{constant action shadow}.}
\end{example}

\begin{example}
\textup{For a less trivial example of an $R$-shadow structure, let
$R$ be the $(t,s)$-rack from example \ref{ex1}, i.e. $R=\mathbb{Z}_4$
with $t=1$, $s=2$ and $x\tr y=tx+sy=x+2y$. Then the three element set
$X=\{1,2,3\}$ is an $R$-shadow with operation matrix}
\[\left[\begin{array}{cccc}
1 & 2 & 1 & 2 \\
2 & 3 & 2 & 3 \\
3 & 1 & 3 & 1
\end{array}\right].
\]
\end{example}

\begin{definition}\textup{
Let $R$ be a rack. Two $R$-shadows $X, Y$ are \textit{isomorphic} if 
there is a bijection $\phi : X \rightarrow Y$ such that 
$\phi(x\cdot r)= \phi (x) \cdot r$ for all $r \in R.$}
\end{definition}

\begin{proposition}
Let $R$ be a rack and let $X$ and $Y$ 
be $n$-element constant action $R$-shadows with permutations 
$\sigma,\tau\in S_n$ respectively. Then $X$ is isomorphic 
to $Y$ if and only if $\sigma$ is conjugate to $\tau$ in $S_n$.
\end{proposition}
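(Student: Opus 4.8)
The plan is to translate the abstract statement into a concrete statement about permutations and then unwind the definitions. The key observation is that for a constant action $R$-shadow on an $n$-element set $X=\{x_1,\dots,x_n\}$, the action of \emph{every} $r\in R$ is the \emph{same} permutation $\sigma$ (the definition assigns $x_i\cdot r=x_{\sigma(i)}$ independently of $r$). So an isomorphism $\phi:X\to Y$ is just a bijection of the underlying $n$-element sets, which we may record as a permutation $\pi\in S_n$ via $\phi(x_i)=y_{\pi(i)}$.

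First I would fix notation: let $X$ have constant action permutation $\sigma$ and $Y$ have constant action permutation $\tau$, so $x_i\cdot r=x_{\sigma(i)}$ and $y_j\cdot r=y_{\tau(j)}$ for all $r\in R$. Then I would write out the isomorphism condition $\phi(x_i\cdot r)=\phi(x_i)\cdot r$ in coordinates. The left side is $\phi(x_{\sigma(i)})=y_{\pi(\sigma(i))}$ and the right side is $y_{\pi(i)}\cdot r=y_{\tau(\pi(i))}$. Hence $\phi$ is an $R$-shadow isomorphism if and only if $\pi\sigma(i)=\tau\pi(i)$ for all $i$, i.e. $\pi\sigma=\tau\pi$ as elements of $S_n$, which is exactly $\tau=\pi\sigma\pi^{-1}$.

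For the forward direction, given an isomorphism $\phi$, the induced $\pi$ satisfies this relation, so $\sigma$ and $\tau$ are conjugate. For the converse, given $\pi$ with $\tau=\pi\sigma\pi^{-1}$, define $\phi(x_i)=y_{\pi(i)}$; this is a bijection by construction and the computation above shows it respects the action of every $r\in R$, hence is an $R$-shadow isomorphism. One small point worth stating explicitly is that the assumption $R\neq\varnothing$ (which is implicit since $R$ is a rack) is not even needed: the condition is vacuous only on an empty diagram, but since the shadow action of each element is $\sigma$, the argument goes through regardless of $|R|$.

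I do not expect any serious obstacle here; the proposition is essentially a bookkeeping exercise once one notices that ``constant action'' makes the $R$-structure collapse to a single permutation. The only thing to be careful about is the direction of composition and the exact placement of inverses (whether conjugation is by $\pi$ or $\pi^{-1}$), which depends on the convention $\phi(x_i)=y_{\pi(i)}$ versus $\phi(x_i)=y_{\pi^{-1}(i)}$; either way one lands on ``conjugate in $S_n$,'' since conjugacy is symmetric. I would present the coordinate computation in a single displayed line and then state both directions as immediate consequences.
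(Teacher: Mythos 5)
Your argument is essentially identical to the paper's proof: both encode the bijection $\phi$ as a permutation $\pi$ (the paper calls it $\Phi$) via $\phi(x_i)=y_{\pi(i)}$, compute that the isomorphism condition reads $\pi\sigma=\tau\pi$, and conclude conjugacy, with the converse obtained by running the same computation backwards. The proposal is correct and takes the same route.
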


\begin{proof}
Suppose $\phi:X\to Y$ is an $R$-shadow isomorphism. Then $\phi(x_i)=y_j$
induces a permutation $\Phi\in S_n$ defined by $\Phi(i)=j$, so that
$\phi(x_i)=y_{\Phi(i)}$ and we have
\[ \phi(x_i\cdot r)=\phi(x_{\sigma(i)})=y_{\Phi(\sigma(i))}
\quad \mathrm{and} \quad
\phi(x_i)\cdot r=y_{\Phi(i)}\cdot r=y_{\tau(\Phi(i))}\]
and hence $\Phi\circ \sigma \circ \Phi^{-1}=\tau$. 

Conversely, $\tau$ conjugate to $\sigma$ in $S_n$ by $\Phi$ in $S_n$ implies
that $\Phi$ induces an isomorphism $\phi:X\to Y$ by $\phi(x_i)=y_{\Phi(i)}$.
\end{proof}

\begin{definition}
\textup{Let $L$ be a framed oriented link diagram and $X$ an $R$-shadow. 
A \textit{shadow coloring} of $L$ by $R$ is an assignment of elements
of $R$ to arcs in $L$ and elements of $X$ to the regions between the arcs
such that at every crossing and along every arc we have}
\[\includegraphics{wc-sn-4.png}\quad \includegraphics{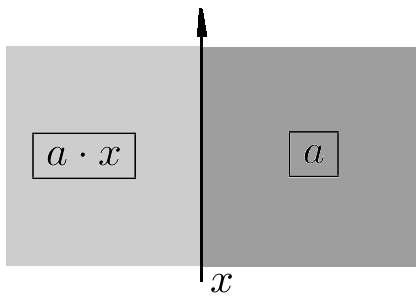}\]
\textup{Shadow colors will be indicated by \framebox{boxes}. Note that the 
requirement that $R$ acts on $X$ via a rack action, i.e.
the rack shadow axiom, is precisely the condition needed to guarantee that 
shadow colorings are well-defined at crossings:}
\[\includegraphics{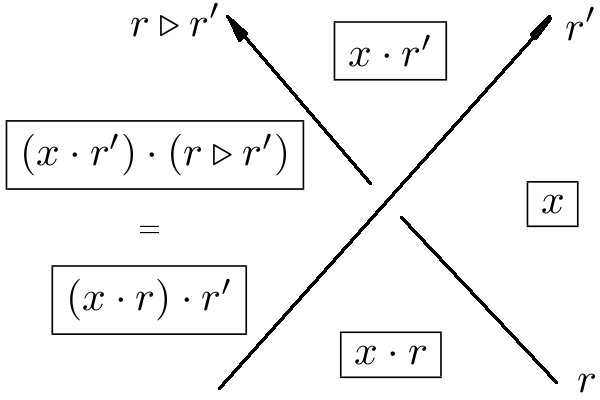}\quad \quad \quad 
\includegraphics{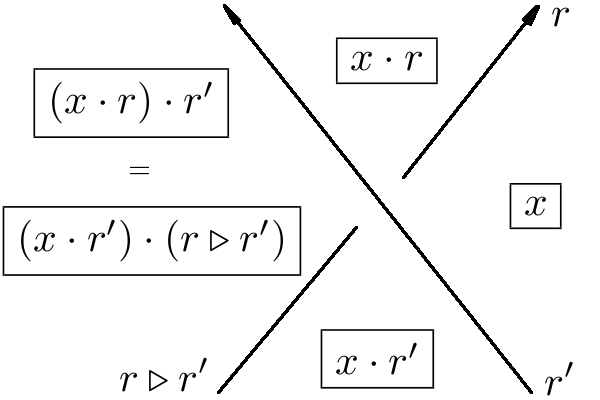}\]
\end{definition}

\begin{proposition}
Let $L$ be a link diagram, $R$ a rack and $X$ an $R$-shadow. Then for each 
rack coloring of $L$ by $R$ and each element of $X$ there is exactly one shadow 
coloring of $L$.
\end{proposition}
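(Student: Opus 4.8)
The plan is to regard a shadow coloring extending a fixed rack coloring as a labeling of the complementary regions of the diagram by elements of $X$, to observe that such a labeling is forced once we choose the label of a single region, and to show that the only obstruction to carrying out this propagation consistently is exactly the rack shadow axiom.

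First I would fix a rack coloring of $L$, i.e.\ an assignment of an element of $R$ to each arc satisfying the crossing condition, and let $\mathcal R$ be the set of regions of the diagram in $S^2$. If regions $P$ and $Q$ are separated by an arc coloured $r$, with $P$ to the left of the oriented arc, then any shadow coloring must assign region colours $x_P,x_Q\in X$ with $x_Q=x_P\cdot r$; conversely, the data of a shadow coloring extending our rack coloring is precisely a function $x\colon\mathcal R\to X$ satisfying this relation across every arc. Since each map $\cdot r$ is a bijection, crossing an arc in either direction is an invertible operation, so it suffices to prove: for every base region $R_0$ and every $x_0\in X$ there is exactly one such $x$ with $x_{R_0}=x_0$.

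Uniqueness is the easy half. The region-adjacency graph of the diagram (vertices $\mathcal R$, one edge per arc) is connected, since any two regions are joined by a path in $S^2$ meeting the diagram transversally and missing the crossings; propagating the relation $x_Q=x_P\cdot r$ along such a path from $R_0$ shows that $x_{R'}$ is determined by $x_0$ for every region $R'$, so there is at most one extension. For existence I would define $x_{R'}$ by choosing a path from $R_0$ to $R'$ and applying the corresponding composite of maps $(\cdot r)^{\pm1}$ to $x_0$, then prove this is independent of the path. The ``holonomy'' (the composite of $(\cdot r)^{\pm1}$'s) of a loop is unchanged when the loop is pushed across an arc away from the crossings, since that only inserts a cancelling pair $(\cdot r)(\cdot r)^{-1}$; hence the holonomy depends only on the homotopy class of the loop in $S^2$ minus the set of crossings, and the cycle space of the region-adjacency graph is generated by the small loops around individual crossings (this graph is the planar dual of the diagram, whose faces are its crossings; crossingless components are handled trivially, as any loop meets such an arc an even number of times in alternating directions). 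It then remains to check that the holonomy around a single crossing is trivial: that loop meets the over-arc twice in opposite directions and each half of the under-arc once, and the resulting composite of four maps equals the identity if and only if $(x\cdot r_1)\cdot r_2=(x\cdot r_2)\cdot(r_1\rhd r_2)$ for the colours occurring there — i.e.\ exactly the rack shadow axiom, which is the content of the local pictures displayed just above the statement. Thus every loop has trivial holonomy, $x$ is well defined, and it visibly satisfies the arc relations, yielding the desired unique extension.

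The main obstacle is this path-independence step, and within it the only genuinely delicate point is the bookkeeping of the left/right and over/under conventions in the one-crossing computation; the figures preceding the statement make that routine. The whole argument is the rack analogue of the well-known fact for quandle shadow colorings in \cite{CKS, FRS, MN}.
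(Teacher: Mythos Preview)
Your argument is correct and follows the same propagation strategy as the paper: fix a base region, assign it an arbitrary element of $X$, and push the colour across arcs using $\cdot r$ and its inverse. The paper simply cites the known quandle case and asserts that this propagation is well defined, whereas you supply the missing justification by reducing path-independence to triviality of the holonomy around each crossing and identifying that condition with the rack shadow axiom.
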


\begin{proof}
This is well-known in the case that $X=R$ is the rack $R$-shadow; see
\cite{MN} for instance. For the case of a general $R$-shadow $X$, consider 
a rack coloring of $L$ and choose a region 
of $L$. Any element of $X$ can be assigned to the chosen region, and any
such choice determines a unique shadow coloring by pushing the color across
the arcs with $\cdot$ or $\cdot^{-1}$.
\end{proof}

\begin{definition}
\textup{Let $L$ be a link diagram, $R$ a rack and $X$ an $R$-shadow. The
\textit{shadow counting invariant} $\mathrm{sc}(L)$ is the number of shadow 
colorings of $L$ by $X$.}
\end{definition}

\begin{corollary}\label{cor1}
The shadow counting invariant of a link $L$ by an $R$-shadow $X$ is
given by \[\mathrm{sc}(L)=|X|\mathrm{rc}(L,R).\]
\end{corollary}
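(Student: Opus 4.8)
The plan is to obtain the formula by combining the bookkeeping in Definition~\ref{def:rcinv} with the proposition proved just above, which already does all the real work. First I would unwind $\mathrm{rc}(L,R)$: by definition it equals $\sum_{\mathbf{w}\in W}|\mathrm{Hom}(FR(L,\mathbf{w}),R)|$, where $W=(\mathbb{Z}_N)^k$ is the space of framing vectors mod the rack rank $N$ of $R$. So it is enough to prove the identity for a single fixed framed diagram $(L,\mathbf{w})$ and then sum.

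Fixing such a diagram, I would choose one region $\rho$ of $(L,\mathbf{w})$ and consider the map from shadow colorings of $(L,\mathbf{w})$ by $X$ to the set $\mathrm{Hom}(FR(L,\mathbf{w}),R)\times X$ that records the underlying rack coloring of the arcs together with the element of $X$ assigned to $\rho$. The proposition above is precisely the statement that this map is a bijection: each rack coloring, together with each choice of element of $X$ on $\rho$, extends in exactly one way to a shadow coloring, obtained by propagating the region labels across the arcs using the bijections $\cdot\,r$ and their inverses, the rack shadow axiom guaranteeing consistency at crossings. Counting cardinalities, the number of shadow colorings of $(L,\mathbf{w})$ is $|X|\cdot|\mathrm{Hom}(FR(L,\mathbf{w}),R)|$.

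Summing over $\mathbf{w}\in W$ and factoring the constant $|X|$ out of the sum then gives
\[
\mathrm{sc}(L)=\sum_{\mathbf{w}\in W}|X|\,|\mathrm{Hom}(FR(L,\mathbf{w}),R)|
=|X|\sum_{\mathbf{w}\in W}|\mathrm{Hom}(FR(L,\mathbf{w}),R)|=|X|\,\mathrm{rc}(L,R),
\]
as claimed; in particular $\mathrm{sc}$ inherits ambient isotopy invariance from $\mathrm{rc}(\,\cdot\,,R)$.

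There is no serious obstacle here, since the substance is entirely in the cited proposition; the only points requiring care are in the setup. One must read $\mathrm{sc}(L)$, exactly as $\mathrm{rc}(L,R)$ is read, as a sum over the finitely many framings mod $N$ rather than as the count for a single arbitrary diagram (which would only be a framed-isotopy invariant), and one should observe that the count $|X|\cdot|\mathrm{Hom}(FR(L,\mathbf{w}),R)|$ does not depend on which base region $\rho$ was chosen, which is automatic once the bijection above is established.
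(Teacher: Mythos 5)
Your proof is correct and follows the same route the paper intends: the corollary is an immediate consequence of the preceding proposition, which gives the bijection between shadow colorings and pairs consisting of a rack coloring and an element of $X$ assigned to a chosen base region. Your additional remark that $\mathrm{sc}(L)$ must be read as a sum over framing vectors in $W=(\mathbb{Z}_N)^k$ to match the definition of $\mathrm{rc}(L,R)$ is a worthwhile clarification of a point the paper leaves implicit.
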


Just as racks have rack polynomials, we can define a polynomial invariant of
$R$-shadows:
 
\begin{definition} \textup{
The \textit{$R$-shadow polynomial} $\mathrm{rsp}(X)$ of 
an $R$-shadow $X$ is the sum}
\[\mathrm{rsp}(X)=\sum_{x \in X} t^{r(x)}\] 
\textup{where $r(x) = |\{r \in R \ | \ x \cdot r = x\}|$.
If $S\subset X$ is a subshadow then the \textit{subshadow polynomial} of 
$s$ is}
\[\mathrm{ssp}_{S\subset X}(t)=\sum_{x \in S} t^{r(x)}\] 
\end{definition}

\begin{proposition}
If two $R$-shadows $X$ and $Y$ are isomorphic, we have 
$\mathrm{rsp}(X)=\mathrm{rsp}(Y).$
\end{proposition}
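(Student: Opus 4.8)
The plan is to show that an $R$-shadow isomorphism $\phi: X \to Y$ induces, for each $x \in X$, an equality of the stabilizer-type sets $\{r \in R \mid x \cdot r = x\}$ and $\{r \in R \mid \phi(x) \cdot r = \phi(x)\}$, and in particular $r(x) = r(\phi(x))$. Then, since $\phi$ is a bijection from $X$ to $Y$, reindexing the defining sum of $\mathrm{rsp}(X)$ along $\phi$ yields $\mathrm{rsp}(Y)$.

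First I would fix an isomorphism $\phi : X \to Y$, so $\phi$ is a bijection with $\phi(x \cdot r) = \phi(x) \cdot r$ for all $x \in X$ and $r \in R$. Next I would observe that for any $r \in R$, the condition $x \cdot r = x$ implies $\phi(x) \cdot r = \phi(x \cdot r) = \phi(x)$; conversely, $\phi(x) \cdot r = \phi(x)$ gives $\phi(x \cdot r) = \phi(x)$, and injectivity of $\phi$ forces $x \cdot r = x$. Hence $\{r \in R \mid x \cdot r = x\} = \{r \in R \mid \phi(x) \cdot r = \phi(x)\}$, so these finite sets have the same cardinality, i.e.\ $r(x) = r(\phi(x))$.

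Finally I would compute
\[
\mathrm{rsp}(X) = \sum_{x \in X} t^{r(x)} = \sum_{x \in X} t^{r(\phi(x))} = \sum_{y \in Y} t^{r(y)} = \mathrm{rsp}(Y),
\]
where the third equality uses that $\phi$ is a bijection onto $Y$. The same argument applied to a subshadow $S \subset X$ and its image $\phi(S) \subset Y$ shows the subshadow polynomials agree as well. There is no real obstacle here; the only point requiring care is using injectivity of $\phi$ (not merely surjectivity or the homomorphism property) to get the reverse inclusion of stabilizer sets, which is what makes $r(x) = r(\phi(x))$ rather than just an inequality.
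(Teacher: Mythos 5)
Your proof is correct and follows essentially the same route as the paper's: establish $r(\phi(x)) = r(x)$ for each $x$ and then reindex the sum along the bijection $\phi$. You simply spell out the stabilizer-set equality (including the use of injectivity for the reverse inclusion) in more detail than the paper does.
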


\begin{proof}

Suppose $\phi:X\to Y$ is an $R$-shadow isomorphism. Then $r(\phi(x))=r(x)$
and the contribution to $\mathrm{rsp}(X)$ from $x\in X$ is equal to the 
contribution to $\mathrm{rsp}(Y)$ from $y=\phi(x)\in Y$. Since every $y\in Y$
is equal to $\phi(x)$ for some $x\in X$ and conversely, it follows that 
the sums are equal.
\end{proof}

\section{\large \textbf{Shadow Enhanced Counting Invariants}}\label{SBinv}

Unfortunately, corollary \ref{cor1} implies that the unenhanced shadow
counting invariant does not contain any more information than the
ordinary rack counting invariant. If we want to exploit rack shadows,
then, we must look to enhancements.

For the rack $R$-shadow $X=R$ in the case that $R$ is quandle, this is 
done in \cite{CKS,MN} with cocycles in the third cohomology of the 
target quandle $H^3_Q(R)$. These invariants have a natural interpretation
in terms of quandle colorings of knotted surfaces in $\mathbb{R}^4$.
In this section we will take a different 
approach, using $R$-shadow polynomials to define an enhancement.

\begin{definition}
\textup{For a shadow coloring $f$ of a link diagram $L$ by an $R$-shadow $X$,
the closure of the set of shadow colors under the action of the image
subrack $\mathrm{Im}(f)\subset R$ of $f$ is a subshadow called the 
\textit{shadow image} of $f$, denoted $\mathrm{si}(f)$.}
\end{definition}

It is easy to see that the shadow image of a shadow colored link diagram
is invariant under framed Reidemeister moves, and thus its subshadow 
polynomial can be used as a signature of the shadow coloring. We then have:

\begin{definition}
\textup{Let $L=L_1\cup L_2\cup \dots \cup L_k$ be an oriented link with
$k$ components, $R$ a finite rack with rack rank $N$, $W=(\mathbb{Z}_N)^k$ 
and $X$ an $R$-shadow. The \textit{multiset shadow polynomial invariant} 
of $L$ with respect to the $R$-shadow $X$ is the multiset}
\[\Phi(L)=\{(\mathrm{ssp}_{\mathrm{si}(f)\subset X}(t),\mathbf{w})\ | \ \mathbf{w}\in 
(\mathbb{Z}_N)^k,\ f \ \mathrm{shadow}\ \mathrm{coloring}\}\]
\textup{and the \textit{$R$-shadow polynomial invariant} of $L$ with 
respect to $X$ is}
\[\mathrm{sp}(L)=\sum_{\mathbf{w}\in W} 
\left(\sum_{f \mathrm{ \ shadow \ coloring}} 
z^{\mathrm{ssp}_{\mathrm{si}(f)\subset X}(t)}q^{\mathbf{w}}\right). \]
\end{definition}

\begin{example}
\textup{If $X$ is the singleton shadow $X=\{x\}$ then 
$\mathrm{sp}(L)=z^t \mathrm{rp}(L)$ since shadow colorings in this case 
are simply rack colorings with every region colored \framebox{$x$} and the
subshadow polynomial of every shadow image is equal to $t$.}
\end{example}

\begin{remark}
\textup{The shadow polynomial specializes to the shadow counting invariant
$\mathrm{sc}(L,T)$ by setting $t=0$ (or equivalently $z=1$). It follows that 
the shadow polynomial is at least as strong an invariant as the integral rack 
counting invariant. The following example demonstrates that the shadow 
polynomial is stronger than the unenhanced shadow counting invariant.}
\end{remark}

\begin{example}
\textup{The two knots below, $5_1$ and $6_1$, both have shadow counting 
invariant $\mathrm{sc}(5_1,R)=60=\mathrm{sc}(6_1,R)$ with respect to the 
$R$-shadow $X=\{1,2\}$ where}
\[
M_X=\left[\begin{array}{cccccccccc}
1 & 1 & 1 & 1 & 1 & 2 & 2 & 2 & 2 & 2 \\
2 & 2 & 2 & 2 & 2 & 1 & 1 & 1 & 1 & 1
\end{array}\right],\ M_R=
\left[\begin{array}{cccccccccc} 
1 & 3 & 5 & 2 & 4 & 3 & 1 & 4 & 2 & 5 \\
5 & 2 & 4 & 1 & 3 & 5 & 3 & 1 & 4 & 2 \\
4 & 1 & 3 & 5 & 2 & 2 & 5 & 3 & 1 & 4 \\
3 & 5 & 2 & 4 & 1 & 4 & 2 & 5 & 3 & 1 \\
2 & 4 & 1 & 3 & 5 & 1 & 4 & 2 & 5 & 3 \\
8 & 9 & 10 & 6 & 7 & 6 & 10 & 9 & 8 & 7 \\
7 & 8 & 9 & 10 & 6 & 8 & 7 & 6 & 10 & 9 \\
6 & 7 & 8 & 9 & 10 & 10 & 9 & 8 & 7 & 6 \\
10 & 6 & 7 & 8 & 9 & 7 & 6 & 10 & 9 & 8 \\
9 & 10 & 6 & 7 & 8 & 9 & 8 & 7 & 6 & 10 
\end{array}\right].
\]
\textup{However, the shadow enhanced invariant distinguishes the knots:}
\[
\begin{array}{cc}
\includegraphics{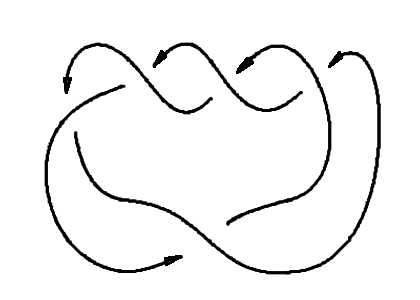} & \includegraphics{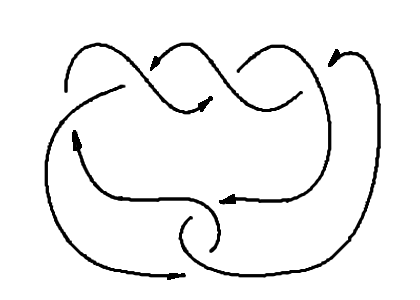} \\
\mathrm{sp}(5_1)=10z + 10z^t + 40z^{t^5} &
\mathrm{sp}(6_1)=50z + 10z^t. \\ 
\end{array}
\]
\end{example}

\section{\large \textbf{Questions for Further Research}}\label{Q}

In this section we collect a few questions for further research. Much 
remains to be done in the study of rack shadows and their invariants.

It is natural to think of a rack action as akin to scalar 
multiplication. What kinds of futher enhancements result when
an $R$-shadow $X$ has extra structure of its own, e.g. when $X$ is
a group or another quandle?  Work is already underway on
quandle/rack homology with coefficients in an $R$-shadow $X$, 
generalizing the 3-cocycle shadow coloring invariants found in 
\cite{CKS, MN}, for instance.

What is the relationship between the shadow polynomial enhanced invariant
determined by an $R$-shadow $X$ and the rack polynomial enhanced invariant
determined by $R$ in \cite{CN}? In the special case where $X=R$, the 
shadow image polynomial is just the subrack polynomial of $\mathrm{Im}(f)$
with $s=1$.

More generally, every rack can be decomposed as a disjoint union of orbit 
subracks which act on each other via rack actions (see \cite{NW}); what is 
the relationship between the invariants defined by the orbit racks,
the rack shadows, and the overall rack?

Many racks and quandles have additional structure such as abelian groups,
modules over various rings, etc. What are some examples of $R$-shadow 
structures defined on abelian groups or modules?

\texttt{python} code for computing the invariants described in this paper
is available from the second author's website at 
\texttt{http://www.esotericka.org}.

\noindent
\textsc{Department of Mathematical Sciences, \\
Claremont McKenna College, \\
850 Columbia Ave., \\
Claremont, CA 91711}

\end{document}